\title{A toric proof of B\'ezout's theorem \\ for weighted projective spaces}    
\author{Bernt Ivar Utst$\o$l N$\o$dland}     
\date{\today}
\begin{document} 

\maketitle

\begin{abstract}
Using toric geometry we prove a B\'ezout type theorem for weighted projective spaces. 
\end{abstract}

\section{Introduction}

The classical B\'ezout's Theorem for projective space states that for divisors $D_1,...,D_n$ on $\p^n$ we have $D_1 \cdots D_n = \Pi_{i=1}^n \deg D_i$ \cite[Prop. 8.4]{Fulton2}. Using toric geometry we will generalize this formula to weighted projective spaces $\p(q_0,...,q_n)$:

\begin{theorem} [B\'ezout's Theorem] \label{Bezout}
Given n torus-invariant divisors $E_1,...,E_n$ on $\p(q_0,...,q_n)$, we have
\[ E_1 \cdots E_n = \frac {\Pi_{i=1}^n \deg E_i}{q_0 \cdots q_n} . \]
\end{theorem}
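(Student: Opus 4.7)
The plan is to use toric intersection theory on the simplicial fan describing $\p(q_0,\ldots,q_n)$. Without loss of generality assume $\gcd(q_0,\ldots,q_n)=1$. Take the lattice $N = \mathbb{Z}^{n+1}/\mathbb{Z}(q_0,\ldots,q_n)$, rays $v_0,\ldots,v_n$ the images of the standard basis (so $\sum q_i v_i = 0$), and maximal cones $\sigma_j = \mathrm{cone}(v_i : i\ne j)$, with corresponding torus-invariant prime divisors $D_0,\ldots,D_n$. Every torus-invariant divisor is a $\mathbb{Z}$-linear combination $E_k = \sum_i a_{ki} D_i$. I would compute the basic intersection numbers of the $D_i$, identify the class group, and then combine the two using rational equivalence.

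The first step is to compute $\mathrm{mult}(\sigma_j)$. The preimage in $\mathbb{Z}^{n+1}$ of the sublattice $\sum_{i\ne j} \mathbb{Z} v_i \subset N$ consists of tuples whose $j$-th coordinate is divisible by $q_j$, and so has index $q_j$. Hence $\mathrm{mult}(\sigma_j) = q_j$, and the standard simplicial toric intersection formula gives $\prod_{i\ne j} D_i = 1/q_j$. The second step is to read off $\mathrm{Cl}(\p(q_0,\ldots,q_n))$ from the toric exact sequence $0 \to M \to \mathbb{Z}^{n+1} \to \mathrm{Cl} \to 0$: one obtains $\mathrm{Cl}\cong\mathbb{Z}$ with $D_i\mapsto q_i$, which is precisely the degree map appearing in the theorem. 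Working in $\mathrm{Cl}\otimes\mathbb{Q}$, pick a generator $h$ so that $D_i = q_i h$; then each $E_k$ is rationally equivalent to $(\deg E_k)\,h$ with $\deg E_k = \sum_i a_{ki} q_i$.

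Combining these steps, $E_1\cdots E_n = \bigl(\prod_k \deg E_k\bigr)\,h^n$, and substituting $D_i = q_i h$ into $\prod_{i\ne j} D_i = 1/q_j$ forces $h^n = 1/(q_0\cdots q_n)$, yielding the stated formula. The only nontrivial arithmetic input is the multiplicity calculation $\mathrm{mult}(\sigma_j) = q_j$; the remainder is formal bookkeeping in toric intersection theory. The principal caveat is that $\p(q_0,\ldots,q_n)$ is only $\mathbb{Q}$-factorial in general, so intersection numbers lie in $\mathbb{Q}$ rather than $\mathbb{Z}$ and one must consistently work with rational divisor classes.
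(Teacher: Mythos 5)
Your argument is correct, but it evaluates the crucial constant $1/(q_0\cdots q_n)$ by a genuinely different and substantially shorter route than the paper. The two proofs coincide in their first half: both identify $\Cl(\p(q_0,\ldots,q_n))\otimes\Q\cong\Q$ via $\deg D_i=q_i$, write $E_k\sim(\deg E_k)h$ for a generator $h$, and reduce the theorem to computing a single top self-intersection number. The paper then computes $D_0^n$ metrically: it realizes $\p(q_0,\ldots,q_n)$ from the weighted simplex $P$ in the hyperplane $\sum q_ix_i=\delta$, constructs an explicit basis of the lattice of that hyperplane (Proposition~\ref{nbasis}), finds the covolume of that basis via the generalized cross product (Proposition~\ref{volume}), and normalizes the Euclidean volume of $P$; this machinery occupies essentially all of Section~3. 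You instead invoke the local formula $D_{i_1}\cdots D_{i_n}=1/\mathrm{mult}(\sigma)$ for complete simplicial toric varieties \cite[Ch.~12.5]{Cox} and compute $\mathrm{mult}(\sigma_j)=q_j$ in two lines as a lattice index (your preimage computation is correct: the preimage of $\sum_{i\ne j}\Z v_i$ in $\Z^{n+1}$ is exactly $\{a: q_j\mid a_j\}$, of index $q_j$). Your route is the more economical one, and the agreement of $h^n=1/(q_0\cdots q_n)$ over all choices of $j$ gives a free consistency check; what the paper's longer detour buys is the explicit lattice basis for the hyperplane section and the normalized volume $\delta^n/(q_0\cdots q_n)$ of the weighted simplex, which have some independent interest. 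One caveat, which applies equally to the paper: both arguments tacitly need the weights to be well formed, i.e.\ $\gcd(q_0,\ldots,\hat{q_i},\ldots,q_n)=1$ for every $i$, since otherwise the images of the standard basis vectors are not primitive in $N$ and $\deg D_i=q_i$ is not even well defined on $\Cl$ (for $\p(1,2,2)\cong\p^2$ the stated formula already fails for $D_0\cdot D_1$). Your closing remark that one must work in $\Cl\otimes\Q$ because the space is only $\Q$-factorial is apt.
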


We recover B\'ezout's theorem for weighted homogeneous polynomials  \cite[Thm. 3.6]{Ertl} by choosing effective divisors intersecting in finitely many points with an additional hypothesis on the degrees of the polynomials defining the divisors. Our theorem also  generalizes B\'ezout's theorem for the weighted projective plane given in \cite[Prop. 5]{bezplane}.

This paper is  based on results from my master thesis \cite{biun} and is written as part of my PhD thesis at the University of Oslo, supervised by Ragni Piene and John Christian Ottem.

\section{Preliminaries on intersection theory and WPS}

For any $n$-dimensional variety $X$ let $Z_k(X)$ be the  free abelian group generated by the set of irreducible closed subvarieties of dimension $k$ on $X$. We define rational equivalence: Let $\alpha \in Z_k(X)$ be equivalent to zero if there exist finitely many $(k+1)$-dimensional subvarieties $V_i \subseteq X$ such that $\alpha$ is the divisor of a rational function on $V_i$ for all $i$. Then the $k$-th Chow group $A_k(X)$ is $Z_k(X)$ modulo rational equivalence.  

We use notation and definitions from \cite{Cox} for toric varieties. For a complex toric variety $X_\Sigma$ defined by a fan $\Sigma$, $A_k(X_\Sigma)$ is generated by the classes of the orbit closures $\overbar{O(\sigma)}$ of the cones $\sigma \in \Sigma(n-k)$ (\cite[Ch.5.1]{Fulton}). If $\Sigma$ is complete and simplicial, setting $A^k(X_\Sigma)=A_{n-k}(X_\Sigma)$, one can define a product
\[ A^k(X) \otimes \mathbb{Q} \times A^l(X) \otimes \mathbb{Q} \to A^{k+l}(X) \otimes \mathbb{Q} \]
which agrees with geometric intersection in nice cases (see \cite[Remark 10.9]{Danilov}). This makes the groups of cycles into a graded ring $A^\bullet(X_\Sigma)_{\mathbb{Q}}$. 

To compute intersections we will also consider the Chow ring of a toric variety, as defined in \cite[Ch. 12.5]{Cox}.

Given a fan $\Sigma$, let $\Sigma(1)=\{\rho_1,...,\rho_r \}$. Denote by $u_i$ the minimal generator of $\rho_i$. We will consider two ideals $\mathscr{I}$, $\mathscr{J}$ in the polynomial ring $\mathbb{Q}[x_1,...,x_r]$. Let
\[ \mathscr{I} = \langle x_{i_1} \cdots x_{i_s} | \text{ all } i_j \text{ distinct and } \rho_{i_1}+ \cdots + \rho_{i_s} \text{ is not a cone in } \Sigma \rangle , \]
\[ \mathscr{J} = \langle \sum_{i=1}^r \langle m,u_i \rangle x_i | \text{ where } m \text{ ranges over a basis of } M \rangle . \]
The ideal $\mathscr{I}$ is called the Stanley--Reisner ideal. The Chow ring $R_{\mathbb{Q}}(\Sigma)$ is defined as
\[ R_{\mathbb{Q}}(\Sigma)= \mathbb{Q} [x_1,...,x_r]/\mathscr{I}+\mathscr{J} . \]
We have that if $X_\Sigma$ is complete and simplicial, then by \cite[Thm 12.5.3]{Cox}
\[R_{\mathbb{Q}}(\Sigma) \cong A^\bullet(X_\Sigma)_{\mathbb{Q}} . \]
We have from \cite[Example 3.1.17]{Cox} the fan for a weighted projective space: Given natural numbers $q_0,...,q_n$ with $\gcd(q_0,...,q_n)=1$, consider the quotient lattice of $\Z^{n+1}$ by the subgroup generated by $(q_0,...,q_n)$. We write $N=\mathbb{Z}^{n+1} / \mathbb{Z} (q_0,...,q_n)$. Let $u_i$ for $i=0,...,n$ be the images in $N$ of the standard basis vectors of $\Z^{n+1}$. This means that in $N$ we have the relation
\[ q_0u_0 + ... + q_nu_n = 0 . \]
Let $\Sigma$ be the fan consisting of all cones generated by proper subsets of $\{ u_0,...,u_n \}$. Then $X_\Sigma = \mathbb{P}(q_0,...,q_n)$, which is complete and simplicial. Moreover, we have 
\[\mathscr{I} = \langle x_0 \cdots x_n \rangle . \]
Since we are  over $\mathbb{Q}$, a basis for $M \otimes \Q = \{ m \in \Z^{n+1} | \sum q_im_i = 0 \}$ will be $(q_i,0,...,0,-q_0,0...,0)$, for $i=1,...,n$. This gives the ideal
\[ \mathscr{J} = \langle q_ix_0 -q_0x_i | i=1,...,n \rangle . \]
Doing the computations, we can eliminate $x_1,...,x_n$ since $x_i = \frac{q_i}{q_0}x_0$, so the Chow ring will be
\[ R_{\mathbb{Q}}(\Sigma) \cong \mathbb{Q}[x_0]/x_0^{n+1} . \]
The group of torus-invariant divisors is the free abelian group generated by one prime divisor $D_i$ for each $1$-dimensional ray $\rho_i$ in $\Sigma$. The $1$-graded part of $R_{\mathbb{Q}}(\Sigma)$ corresponds to $\Cl(\p(q_0,...,q_n)) \otimes \Q$. It is well known that $\Cl(\p(q_0,...,q_n))$ is isomorphic to $\Z$ via the degree map, where $\deg D_i = q_i$ (see for instance \cite[Thm. 1.19]{RossiTerra}). Then we have relations $q_iD_0 = q_0D_i$, thus choosing the image of $D_0$ (by abuse of notation we denote this by $D_0$ as well) as a generator for  $\Cl(\p(q_0,...,q_n)) \otimes \Q \simeq \Q$, we get that $D_i$ is mapped to $\frac{q_i}{q_0} D_0$. Taking any torus-invariant divisor $D = \sum_{i=0}^n a_iD_i$, we have  $\deg D= \sum_{i=0}^n a_iq_i$. Then in the Chow ring, $D$ gets mapped to $\sum_{i=0}^n a_iD_i = \sum_{i=0}^n a_i \frac {q_i}{q_0}D_0 = \frac{D_0}{q_0} \sum_{i=0}^n a_iq_i = \frac{deg D}{q_0} D_0$.

Taking $n$ torus-invariant divisors $E_1,...,E_n$ it then follows
\[ E_1 \cdots E_n = \frac {\Pi_{j=1}^n \deg E_j}{q_0^n} D_0^n , \]
thus we have determined intersections of divisors modulo the generator $D_0^n$. We wish to push this forward to $\Spec \C$ to obtain an actual number, i.e., to calculate $\int_{X_\Sigma} D_0^n$.

If a  complete variety $X_\Sigma$ of dimension $n$ is embedded in $\p^s$ via a very ample divisor $D$, define  $D^n \defeq \deg (X_{\Sigma_P} \subset \p^s) = \int_{X_\Sigma} D^n$. Then  by \cite[Thm. 13.4.3]{Cox}, $D^n$ equals $\Vol(P_D)$, i.e., the volume of the polytope associated to the divisor, normalized with respect to the lattice $M$.

To apply this we need to to describe a polytope giving $\p(q_0,...,q_n)$. From \cite[Remark 1.24 and Cor 1.25]{RossiTerra} we have the following polytope:

Given $(q_0,...,q_n)$ and $M \cong \Z^{n+1}$, let $\delta = \lcm(q_0,...,q_n)$. Consider the $n+1$ points of $M_\R \cong \R^{n+1}$:
\[ v_i = (0,...,\frac{\delta}{q_i},...0)\]
Let $\Delta$ be the convex hull of $0$ and all $v_i$. Intersecting $\Delta$ with the hyperplane $H= \{ (x_0,...,x_n) | \sum_{i=0}^n x_iq_i = \delta \}$, we get an $n$-dimensional polytope $P$. Then $X_P \cong \p(q_0,...,q_n)$ and the divisor $D$ associated to the polytope will be  $\frac {\delta}{q_0}D_0$ which is very ample.

If we  can determine the  volume of $P$,  we can determine $D_0^n$, since
\[ \Vol(P) = D^n=\frac {\delta^n}{q_0^n}D_0^n , \]
implying that $D_0^n=\Vol(P) \frac{q_0^n}{\delta^n}$.

\section{Proof of Theorem \ref{Bezout}}

To determine the  volume of $P$, we will use the generalized cross product (see \cite{Cross}). For $n$ vectors $v_1,...,v_n \in \R^{n+1}$, let $A$ be the matrix with $i$-th row $v_i$. The cross product $v_1 \times  \cdots \times v_n \in \R^{n+1}$ has  $k$-th coordinate equal to $(-1)^k$ times the $n \times n$ minor of $A$ obtained by removing the $k$-th column. This cross product is orthogonal to all $v_i$ and satisfies
\[ |v_1 \times  \cdots \times v_n| = \Vol(v_1,...,v_n) , \]
where $\Vol(v_1,...,v_n)$ is the $n$-dimensional volume of the parallelotope spanned by $v_1,...,v_n$ (this product can be expressed by exterior algebra operations as the Hodge dual *$(v_1 \wedge \cdots \wedge v_n)$).

To determine the  volume, we first need to normalize with respect to the lattice, i.e., we need to determine the volume spanned by a basis. We will need the following to choose a basis for the lattice spanned by $P$: Given set of linearly independent vectors $b_1,...,b_n \in M$ let 
\[ T(b_1,...,b_n) = \{ \sum_{i=1}^n c_ib_i | 0 \leq c_i < 1 \} \subseteq M_{\mathbb{R}} = M \otimes \mathbb{R} .\]
The following is well-known, but we include a proof for lack of a proper reference:
\begin{lemma} \label{basis}
The vectors $b_1,...,b_n$ form a basis for the lattice $M$ if and only if $T(b_1,...,b_n) \cap M = \{ 0 \}$ .
\end{lemma}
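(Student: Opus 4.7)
The plan is to prove both implications directly from the definition of $T(b_1,\ldots,b_n)$, using the fact that $n$ linearly independent vectors in $M$ automatically span $M_{\mathbb{R}}$ (since $M$ has rank $n$ in our setting), so every element of $M$ has a well-defined expression $\sum c_i b_i$ with real coefficients.

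For the forward direction, assume $b_1,\ldots,b_n$ is a $\mathbb{Z}$-basis for $M$. Any $m \in M$ then admits a \emph{unique} expression $m = \sum a_i b_i$ with $a_i \in \mathbb{Z}$. If in addition $m \in T(b_1,\ldots,b_n)$, then uniqueness of real coefficients (since the $b_i$ are linearly independent over $\mathbb{R}$) forces $0 \le a_i < 1$, and combined with $a_i \in \mathbb{Z}$ this yields $a_i = 0$ for all $i$, so $m = 0$.

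For the converse, assume $T(b_1,\ldots,b_n) \cap M = \{0\}$ and let $m \in M$ be arbitrary. Write $m = \sum c_i b_i$ uniquely with $c_i \in \mathbb{R}$, and set
\[ m' = m - \sum_{i=1}^n \lfloor c_i \rfloor b_i = \sum_{i=1}^n (c_i - \lfloor c_i \rfloor) b_i . \]
Since each $b_i \in M$, we have $m' \in M$; by construction $0 \le c_i - \lfloor c_i \rfloor < 1$, so $m' \in T(b_1,\ldots,b_n)$. The hypothesis forces $m' = 0$, hence $c_i \in \mathbb{Z}$ for all $i$, which shows $m$ lies in the $\mathbb{Z}$-span of the $b_i$. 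As $m \in M$ was arbitrary, the $b_i$ generate $M$, and being linearly independent they form a $\mathbb{Z}$-basis.

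There is no real obstacle here — the only subtlety is making sure one is allowed to write $m = \sum c_i b_i$ with real coefficients in the first place, which is immediate once one notes that $n$ linearly independent vectors in the rank-$n$ space $M_{\mathbb{R}}$ form an $\mathbb{R}$-basis. The argument of subtracting off the integer parts $\lfloor c_i \rfloor$ is the standard trick to land inside the fundamental half-open parallelotope, and the hypothesis then collapses the fractional parts to zero.
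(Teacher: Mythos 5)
Your proof is correct and follows essentially the same route as the paper's: the forward direction uses uniqueness of the real coefficients to force the integer coordinates into $[0,1)$ and hence to vanish, and the converse subtracts the integer parts $\lfloor c_i \rfloor$ to land in the fundamental parallelotope and applies the hypothesis. The only cosmetic difference is that you make explicit the (true and worth noting) point that $n$ linearly independent vectors in the rank-$n$ lattice automatically form an $\mathbb{R}$-basis of $M_{\mathbb{R}}$, which the paper uses implicitly.
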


\begin{proof}
Assume $(b_1,...,b_n)$ is a basis. Let $x \in T(b_1,...,b_n) \cap M$. Then $x= \sum_{i=1}^n c_ib_i = \sum_{i=1}^n n_ib_i$ for $0 \leq c_i < 1$, $n_i \in \mathbb{Z}$. Thus $0 = \sum_{i=1}^n (c_i-n_i)b_i$. Since the $b_i$'s are linearly independent, this implies that $c_i = n_i$, hence $c_i=0$.

Assume $T(b_1,...,b_n) \cap M = \{ 0 \}$. Pick a lattice point $x \in M$. Since $b_1,...,b_n$ is a basis for the vector space $M_{\mathbb{R}}$  we can find $d_i \in \mathbb{R}$ such that $x= \sum_{i=1}^n d_ib_i$. Let $d_i = n_i +c_i$ where $n_i \in \mathbb{Z}$ and $0 \leq c_i < 1$. Then $x - \sum_{i=1}^n n_ib_i \in T(b_1,...,b_n) \cap M = \{ 0 \}$, hence $c_i = 0$ for all $i$. Thus $b_1,...,b_n$ is a basis for $M$.
\end{proof}

  First we choose an edge of the polytope $P$, say the edge $v_0v_1$, which is generated by $(-\frac{\delta}{q_0},\frac{\delta}{q_1},0,...,0)$. For simpler notation set $q_{i_1,...,i_s}=\gcd(q_{i_1},...,q_{i_s})$. The primitive generator of the edge $v_0v_1$ will be $e_1=(-\frac{q_1}{q_{01}},\frac{q_0}{q_{01}},0,...,0)$. Now, choose any lattice point of $H = \{ (x_0,...,x_n) | \sum_{i=0}^n x_iq_i = \delta \}$ of the form
\[ (x_{20},x_{21},\frac{q_{01}}{q_{012}},0,...,0), \]
this exists since the numbers obtained as integral linear combination of $q_0,q_1$ are exactly all multiples of $q_{01}$, and $\delta-q_2 \frac{q_{01}}{q_{012}}$ is such a multiple (the subscripts are chosen for notational purposes which will become clear) . Set $e_2$ as the difference between this point and $v_0$, in other words
\[e_2=(x_{20}-\frac{\delta}{q_0},x_{21},\frac{q_{01}}{q_{012}},0,...,0 ) .\]
In general, for all $2 \leq s \leq n$ find a lattice point of the form
\[ (x_{i0},x_{i1},...,x_{i(s-1)},\frac{q_{0...s-1}}{q_{0...s}},0,...,0). \]
This is equivalent to saying
\[ x_{i0}q_0 + x_{i1}q_1 + \cdots + x_{i(s-1)}q_{s-1} + \frac{q_{0...s-1}}{q_{0...s}} q_s = \delta .\]
Set
\[ e_s = ( x_{i0}-\frac{\delta}{q_0},x_{i1},...,x_{i(s-1)},\frac{q_{0...s-1}}{q_{0...s}},0,...,0). \]
Then we have

\begin{proposition} \label{nbasis}
The $n$ vectors $\{ e_1,...,e_n \}$ constructed above, are a basis for the lattice spanned by $H$.
\end{proposition}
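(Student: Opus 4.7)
My approach is to invoke Lemma \ref{basis}. Since each $e_t$ is the difference of two lattice points lying on the affine hyperplane $H$, it lies in the lattice $L := \{m \in \mathbb{Z}^{n+1} \mid \sum_{i=0}^n q_i m_i = 0\}$ of translations of $H \cap \mathbb{Z}^{n+1}$. It therefore suffices to show two things: (a) the $e_t$ are linearly independent, and (b) $T(e_1, \ldots, e_n) \cap L = \{0\}$.

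For linear independence, I would observe that by construction the $s$-th coordinate of $e_t$ vanishes for $t < s$ (the ``zero tail'' in the definition), while the diagonal entry $(e_t)_t = q_{0\ldots t-1}/q_{0\ldots t}$ is nonzero. Hence the $n \times n$ submatrix of $[e_1, \ldots, e_n]$ obtained by discarding the $0$-th coordinate row is upper triangular with nonzero diagonal.

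For (b), I would take $x = \sum_{t=1}^n c_t e_t \in L$ with $0 \le c_t < 1$ and prove by reverse induction on $s \in \{n, n-1, \ldots, 1\}$ that $c_s = 0$. Assuming inductively that $c_{s+1} = \cdots = c_n = 0$, the triangular structure gives $x_{s+1} = \cdots = x_n = 0$ and $x_s = c_s \cdot q_{0\ldots s-1}/q_{0\ldots s}$. The key arithmetic input is that $x \in L$ implies $q_s x_s = -\sum_{i=0}^{s-1} q_i x_i$ lies in $q_{0\ldots s-1}\mathbb{Z}$, so combined with the elementary identity $\gcd(q_s, q_{0\ldots s-1}) = q_{0\ldots s}$, the integer $x_s$ must be a multiple of $q_{0\ldots s-1}/q_{0\ldots s}$. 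Since $0 \le c_s < 1$ forces $0 \le x_s < q_{0\ldots s-1}/q_{0\ldots s}$, this leaves $x_s = 0$ and hence $c_s = 0$, closing the induction.

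The proof is essentially routine once framed this way; the main obstacle I anticipate is purely bookkeeping, namely keeping the nested gcds $q_{0\ldots s}$ straight and cleanly tracking which entries of which $e_t$ contribute to a given coordinate of $x$. The conceptual content reduces to a standard upper-triangular argument plus the divisibility fact $\gcd(q_s, \gcd(q_0, \ldots, q_{s-1})) = \gcd(q_0, \ldots, q_s)$.
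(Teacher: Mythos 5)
Your proposal is correct and follows essentially the same route as the paper: invoke Lemma \ref{basis}, exploit the triangular structure of the $e_t$ to isolate $x_s$, and run a reverse induction using the defining equation of $H$ modulo $q_{0\ldots s-1}$. Your use of the identity $\gcd(q_s, q_{0\ldots s-1}) = q_{0\ldots s}$ to force $x_s \in (q_{0\ldots s-1}/q_{0\ldots s})\mathbb{Z}$ is just a cleaner packaging of the paper's prime-power divisibility argument, and working with the translation lattice $L$ (homogeneous equation) rather than points of $H$ itself is a harmless, arguably tidier, variant.
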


\begin{proof}
We will use Lemma \ref{basis} to show this. Assume we have a lattice point $l=\sum_{i=1}^n c_ie_i$, where $0 \leq c_i < 1$ for all $i$. Then it suffices to show that all $c_i=0$.
We will show this by descending induction on $c_n$. Let $l=(y_0,...,y_n)$. Then we have, by definition of $H$,
\begin{equation} \label{jau}
\sum_{i=0}^n q_iy_i=\delta .
\end{equation}
Consider the $(n+1)$-th coordinate. Since the basis is constructed in such a way that the only vector having nonzero $(n+1)$-th coordinate is $e_n$, we must have $y_n=c_n \frac{q_{0,...,n-1}}{q_{0,...,n}}$. When we defined weighted projective space, we assumed $q_{0,...,n}=1$. Thus we must have $y_n=c_n q_{0,...,n-1}$. Now consider \eqref{jau} modulo $(q_{0,...,n-1})$: The righthand side is $0$ and the first terms $q_0y_0+...+q_{n-1}y_{n-1}$ will be zero, since in general integral linear combinations of a set of integers are exactly the multiples of their greatest common divisor. Thus we must have
\[ q_ny_n \equiv q_nc_nq_{0,...,n-1} \equiv 0 \pmod{q_{0,...,n-1}}. \]
Now since $c_n < 1$, we have $c_nq_{0,...,n-1} < q_{0,...,n-1}$, and if $0<c_n$ there must be some prime power $p^r$ dividing $q_{0,...,n-1}$ which does not appear in $c_nq_{0,...,n-1}$. But then we must have that $p$ divides $q_n$, which implies $q_{0,...,n}>1$ which is a contradiction. Thus $c_n = 0$.

Assume in general we have proved that $c_n=c_{n-1}=...=c_{s+1}=0$. We will show that $c_s=0$. We will use the same method as above: Since $c_{s+1}=...=c_n=0$, we have a linear combination $l=\sum_{i=0}^s c_ie_i$. In the set $\{e_1,...,e_s \}$, the only vector with $(s+1)$-th coordinate nonzero will be $e_s$. Thus we must have $y_s=c_s \frac {q_{0,...,s-1}}{q_{0,...,s}}$. Considering \eqref{jau} modulo $q_{0,...,s-1}$ we get 
\[ q_sy_s \equiv q_sc_s \frac{q_{0,...,s-1}}{q_{0,...,s}} \equiv 0 \pmod{q_{0,...,s-1}}. \]
Now, since $l$ is a lattice point, $c_s \frac{q_{0,...,s-1}}{q_{0,...,s}}$ is an integer $k < \frac{q_{0,...,s-1}}{q_{0,...,s}} $. Rewriting the above we get
\begin{equation} \label{jups}
 \frac {q_s}{q_{0,...,s}} k q_{0,...,s} \equiv 0 \pmod{q_{0,...,s-1}} ,
\end{equation}
since $k q_{0,...,s}=c_s q_{0,...,s-1} < q_{0,...,s-1}$, we must have, if $0 < c_s$, that there is a prime power $p^r$ in the prime factorization of $q_{0,...,s-1}$, which appears to a smaller degree in the prime factorization of $c_s q_{0,...,s-1}$. By the previous equality, the highest power of $p$ which can appear in $q_{0,...,s}$ will also be smaller than $r$, say it is $(r-t)$. But to satisfy \eqref{jups} we must also have that $p$ divides $\frac{q_s}{q_{0,...,s}}$, which implies that $p^{r-t+1}$ divides $q_s$, but then $p^{r-t+1}$ will divide $q_{0,...,s}$ which is a contradiction. Thus we must have $c_s = 0$. 

The last case is an exception. If $s=0$ we have $l = c_0e_0$, but by construction of $e_0$ as a primitive vector we must have $c_0=0$. Hence we are done.
\end{proof}

Now we can use this to calculate the normalized volume.

\begin{proposition} \label{volume}
The volume of the parallelotope spanned by $e_1,...,e_n$ is $\sqrt{q_0^2+...+q_n^2}$.
\end{proposition}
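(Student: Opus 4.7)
The plan is to exploit the generalized cross product formula for the volume together with the telescoping structure that is built into the construction of the $e_i$'s.

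First I would observe that each $e_s$ lies in the hyperplane through the origin parallel to $H$: by construction of $e_s$ as the difference of two points of $H$, one checks directly that $\sum_{i=0}^n q_i (e_s)_i = \delta - \delta = 0$. Hence the vector $e_1 \times \cdots \times e_n$, being orthogonal to every $e_s$, must be parallel to the normal $(q_0,\ldots,q_n)$ of $H$. Write
\[ e_1 \times \cdots \times e_n = \lambda (q_0,\ldots,q_n) \]
for some $\lambda \in \mathbb{R}$. Then
\[ \Vol(e_1,\ldots,e_n) = |e_1 \times \cdots \times e_n| = |\lambda| \sqrt{q_0^2 + \cdots + q_n^2}, \]
so the proposition reduces to showing $|\lambda| = 1$.

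Next I would compute a single coordinate of the cross product, namely the one obtained by deleting the $0$-th column of the $n \times (n+1)$ matrix $A$ whose rows are $e_1,\ldots,e_n$. By the construction of the $e_s$, coordinate $j$ of $e_s$ vanishes for $j > s$, and the entry in position $s$ of $e_s$ equals $q_{0\ldots s-1}/q_{0\ldots s}$ (with the convention $q_{0\ldots 0}=q_0$). Therefore the $n\times n$ submatrix obtained by deleting column $0$ is upper triangular, and its determinant is the telescoping product
\[ \prod_{s=1}^{n} \frac{q_{0\ldots s-1}}{q_{0\ldots s}} = \frac{q_0}{q_{0\ldots n}} = q_0, \]
where the final equality uses our standing assumption $\gcd(q_0,\ldots,q_n)=1$.

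Comparing this with the $0$-th coordinate $\lambda q_0$ of $\lambda(q_0,\ldots,q_n)$ gives $|\lambda|=1$, which finishes the proof. The only non-routine step is recognising the triangular shape of the matrix after deleting the first column and the telescoping of the resulting product; once that is spotted, everything else is formal.
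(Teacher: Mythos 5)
Your proof is correct, and it takes a genuinely different and in fact leaner route than the paper's. The paper computes every coordinate $z_s$ of $z = e_1 \times \cdots \times e_n$ directly: for general $s$ this requires expanding along columns to reduce to the upper-left block $B_s$, then rescaling columns and performing successive column additions to make $B_s$ triangular, using the defining relation $x_{s0}q_0 + \cdots + x_{s(s-1)}q_{s-1} + \frac{q_{0\ldots s-1}}{q_{0\ldots s}}q_s = \delta$ to identify the resulting diagonal entries. You instead observe that each $e_s$, being a difference of two points of $H$, satisfies $\sum_i q_i (e_s)_i = 0$, so $z$ lies in the line $\mathbb{R}(q_0,\ldots,q_n)$ (this holds even before one knows the $e_s$ are independent, since otherwise $z=0$), and hence only one coordinate needs to be computed --- and you pick the one coordinate whose minor is triangular for free, with the telescoping product $\prod_{s=1}^n \frac{q_{0\ldots s-1}}{q_{0\ldots s}} = q_0$. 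This buys a substantially shorter argument and makes transparent \emph{why} the answer is $\sqrt{q_0^2+\cdots+q_n^2}$: it is just the length of the normal vector to $H$. What the paper's longer computation buys in exchange is the explicit identity $z_s = q_s$ for every $s$, which is a slightly stronger statement, though it is not needed for anything else in the paper. Two trivial quibbles: the submatrix you obtain by deleting column $0$ is lower, not upper, triangular (immaterial, since the determinant is the product of the diagonal entries either way), and strictly speaking the sign convention for the cross product gives $z_0 = \pm q_0$, but you only use $|\lambda|$, so this is harmless.
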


\begin{proof}
The coordinates of $z=e_1 \times \cdots \times e_n$ will be (modulo a sign) the  $n \times n$ minors of the matrix $A$ with row $i$ equal to $e_i$.

\begin{align*}
A=
  \begin{bmatrix}
 -\frac{q_1}{q_{01}} & \frac{q_0}{q_{01}} & 0 & 0 & \cdots & 0\\ 
 x_{20}-\frac{\delta}{q_0} & x_{21} & \frac{q_{01}}{q_{012}} & 0 & \cdots &0  \\
x_{30}-\frac{\delta}{q_0} & x_{31} & x_{32} & \frac{q_{012}}{q_{0123}}  & \ddots &0  \\
\vdots  & \vdots & \vdots & \ddots & \ddots & 0 \\
x_{n0}-\frac{\delta}{q_0} & x_{n1} & x_{n2} & x_{n3} & \cdots & \frac{q_{0,...,n-1}}{q_{0,...,n}}
\end{bmatrix}
\end{align*}

Set $z=(z_0,...,z_n)$. We see immediately that  $z_0=q_0$ and $z_1=q_1$, since the corresponding minors are lower triangular and $q_{0,...,n}=1$. To calculate $z_s$ we get, by expanding along the columns from the right, that $z_s=(-1)^sq_{0,...,s}\det(B_s)$ where $B_s$ is the $s \times s$ submatrix from the upper left corner of $A$. Consider such a $B_s$:

\begin{align*}
B_s=
  \begin{bmatrix}
 -\frac{q_1}{q_{01}} & \frac{q_0}{q_{01}} & 0 & 0  & \cdots & 0\\ 
 x_{20}-\frac{\delta}{q_0} & x_{21} & \frac{q_{01}}{q_{012}} & 0 & \cdots &0  \\
x_{30}-\frac{\delta}{q_0} & x_{31} & x_{32} & \frac{q_{012}}{q_{0123}}  & \ddots &0  \\
\vdots  & \vdots & \vdots & \ddots & \ddots & 0 \\
\vdots  & \vdots & \vdots & \ddots & \ddots & \frac{q_{0,...,s-2}}{q_{0,...,s-1}} \\
x_{s0}-\frac{\delta}{q_0} & x_{s1} & x_{s2} & x_{s3} & \cdots & x_{s(s-1)}
\end{bmatrix} \end{align*}

Enumerating the columns $0,...,s-1$, after multiplying column $i$ by $q_i$ (thus changing the determinant by a factor of $q_0 \cdots q_{s-1}$) for all $i$, observe that,  by the construction of $e_i$, the sum of all rows except the last one is $0$. For $i=0,...,s-2$ do successively the column operation: add column $i$ to column $i+1$. This will not change the determinant, and observe that by the remark about the row sums, the new matrix will be lower triangular. Thus the determinant will be the product of the diagonal elements.

Diagonal entry number $r$ will be equal to $x_{r0}q_0 -\delta +x_{r1}q_1+...+x_{r(r-1)}q_{r-1}$, which by construction equals $-\frac{q_{0,...,r-1}}{q_{0,...,r}}$. So we get
\[ \frac{1}{q_0 \cdots q_{s-1}}\det(B_s)=(-1)^s\frac{q_0 \cdots q_s}{q_{0,...,s}} \] 
implying that $z_s=q_s$.

The result now follows from the fact that $|z|^2=q_0^2+ \cdots q_n^2$.
\end{proof}

By this result, we have that a polytope with a Euclidean volume of $\frac{\sqrt{q_0^2+\cdots q_n^2}}{n!}$ will have normalized lattice volume equal to $1$, in the lattice spanned by $H$. Using this we have:

\begin{proposition}
The  normalized volume of $P$ is $\frac{\delta^n}{q_0 \cdots q_n}$.
\end{proposition}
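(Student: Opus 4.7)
The plan is to compute the Euclidean volume of $P$ directly via the generalized cross product and then convert the answer to a normalized lattice volume using Proposition~\ref{volume}. Since $P$ is the $n$-simplex with vertices $v_0,\ldots,v_n$, its Euclidean $n$-dimensional volume equals
\[
\tfrac{1}{n!}\bigl|(v_1-v_0)\times\cdots\times(v_n-v_0)\bigr|,
\]
the cross product being the one recalled at the start of this section. The edge vector $v_i-v_0$ has $-\delta/q_0$ in coordinate $0$, $\delta/q_i$ in coordinate $i$, and zeros elsewhere, so the $n\times(n+1)$ matrix of edges is very sparse.

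I would then compute the cross product coordinate by coordinate, by evaluating the $n\times n$ minors obtained by deleting each column of this matrix. Deleting column $0$ leaves a diagonal matrix with entries $\delta/q_i$, giving a minor of absolute value $\delta^n/(q_1\cdots q_n)=\delta^n q_0/(q_0\cdots q_n)$. For $k\ge 1$, deleting column $k$ leaves a matrix whose row $k$ is $(-\delta/q_0,0,\ldots,0)$, since the entry $\delta/q_k$ has been removed; a single cofactor expansion along that row reduces the minor to a diagonal one of absolute value $\delta^n q_k/(q_0\cdots q_n)$. Every coordinate of the cross product therefore has absolute value $\delta^n q_k/(q_0\cdots q_n)$, so its norm is
\[
\frac{\delta^n}{q_0\cdots q_n}\sqrt{q_0^2+\cdots+q_n^2}.
\]

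Finally, by the observation immediately preceding the proposition, the normalized lattice volume equals $n!/\sqrt{q_0^2+\cdots+q_n^2}$ times the Euclidean volume. Substituting the computation above, the two square roots cancel and the factor $n!$ cancels the $1/n!$ from the simplex formula, leaving $\delta^n/(q_0\cdots q_n)$ as claimed. The only step involving genuine computation is the cross product, and the sparsity of the matrix of edges makes it essentially immediate; I therefore do not anticipate a real obstacle, since all the conceptual content has already been packaged into Propositions~\ref{nbasis} and~\ref{volume}, and this last proposition is a clean evaluation.
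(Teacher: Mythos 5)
Your proposal is correct and follows essentially the same route as the paper: both compute the cross product of the edge vectors $w_i=v_i-v_0$, obtain the norm $\frac{\delta^n}{q_0\cdots q_n}\sqrt{q_0^2+\cdots+q_n^2}$, and divide out the normalization factor from Proposition~\ref{volume}. Your explicit cofactor expansion of each minor is just a slightly more detailed writeup of the same computation.
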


\begin{proof}
The edges emanating from $v_0$ are spanned by the vectors
\[ w_i = (-\frac{\delta}{q_0},0,...,\frac{\delta}{q_i},0,...,0), \]
for $i=1,...,n$. The Euclidean volume of $P$ will be $\frac{|w_1 \times \cdots \times w_n|}{n!}$. The corresponding matrix is

\begin{align*}
\begin{bmatrix}
 -\frac{\delta}{q_0} & \frac{\delta}{q_1} & 0 & 0 & \cdots & 0\\
 -\frac{\delta}{q_0} & 0 &  \frac{\delta}{q_2}  & 0 & \cdots & 0\\
 -\frac{\delta}{q_0} & 0 & 0 & \frac{\delta}{q_3}  & \cdots & 0\\
\vdots  & \vdots & \vdots & \ddots & \ddots & 0 \\
 -\frac{\delta}{q_0} & 0 & 0 & 0 & \cdots & \frac{\delta}{q_n}
\end{bmatrix} .
\end{align*}

We see that $w_1 \times \cdots \times w_n = (\frac{\delta^n}{q_1 \cdots q_n}, \frac{\delta^n}{q_0q_2 \cdots q_n}, ...,\frac{\delta^n}{q_0 \cdots \hat{q_i} \cdots q_n}, \cdots, \frac{\delta^n}{q_0 \cdots q_{n-1}})$. This implies 
\[ |w_1 \times \cdots w_n|^2 = \frac{\delta^{2n}q_0^2 + \delta^{2n}q_1^2 + ... + \delta^{2n}q_n^2}{q_0^2 \cdots q_n^2} ,  \]
giving
\[  |w_1 \times \cdots w_n| = \frac{\delta^n}{q_0 \cdots q_n} \sqrt{q_0^2+...+q_n^2} .\]
Combinining this with the normalization yields the result.

\end{proof}

Finally we can return to intersection theory on $\p(q_0,...,q_n)$. Recall that we have $D_0^n = \Vol(P) \frac{q_0^n}{\delta^n}$. Inserting the above gives $D_0^n = \frac {q_0^n}{q_0 \cdots q_n}$. Combining this with the previous calculations we get
\[ E_1 \cdots E_n = \frac {\Pi_{j=1}^n \deg E_j}{q_0^n}D_0^n =\frac {\Pi_{j=1}^n \deg E_j}{q_0^n} \frac {q_0^n}{q_0 \cdots q_n} = \frac {\Pi_{j=1}^n \deg E_j} {q_0 \cdots q_n} \]
thus we are done.

\AtEndDocument{\bigskip{\footnotesize%
  \textsc{Department of Mathematics, University of Oslo,  Norway} \par  
  \textit{E-mail address},  \texttt{berntin@math.uio.no} \par
}}

\bibliography{reference}
\bibliographystyle{alpha}
\end{document}